\newtheorem{thm}{Theorem}[section]
\newtheorem{cor}[thm]{Corollary}
\newtheorem{claim}[thm]{Claim}
\theoremstyle{definition}
\newtheorem{defn}[thm]{Definition}
\theoremstyle{definition}
\newtheorem{obsv}[thm]{Observation}
\theoremstyle{definition}
\newtheorem{question}[thm]{Question}
\newcommand{\rst}[1]{\ensuremath{{\mathbin\upharpoonright}%
\raise-.5ex\hbox{$#1$}}}
\begin{document}

\renewcommand{\thefootnote}{\fnsymbol{footnote}}

\title{The game of plates and olives}

\author{Teena Carroll\thanks{Mathematics Department, Emory \& Henry College, Emory VA; ccarroll@ehc.edu.}
\and
David Galvin\thanks{Department of Mathematics,
University of Notre Dame, Notre Dame IN; dgalvin1@nd.edu. Supported in part by National Security Agency grants H98230-10-1-0364 and H98230-13-1-0248 and by the Simons Foundation.}}

\date{\today}

\maketitle

\begin{abstract}
The game of plates and olives, introduced by Nicolaescu, begins with an empty table. At each step either an empty plate is put down, an olive is put down on a
plate, an olive is removed, an empty plate is removed, or the olives on two plates that both have olives on them are combined on one of the two plates, with the other plate
removed. Plates are indistinguishable from one another, as are olives, and there is an inexhaustible supply of each. 

The game derives from the consideration of Morse functions on the $2$-sphere. Specifically, the number of topological equivalence classes of excellent Morse functions on the $2$-sphere that have order $n$ (that is, that have $2n+2$ critical points) is the same as the number of ways of returning to an empty table for the first time after exactly $2n+2$ steps. We call this number $M_n$.

Nicolaescu gave the lower bound $M_n \geq (2n-1)!! = (2/e)^{n+o(n)}n^n$ and speculated that $\log M_n \sim n\log n$. In this note we confirm this speculation, showing that $M_n \leq (4/e)^{n+o(n)}n^n$.
\end{abstract}

\section{Introduction}

The basic aim of Morse theory is to gain knowledge of the topology of a manifold by studying smooth functions on it. For a smooth, compact, oriented manifold $X$ without boundary, an {\em excellent Morse function} is a smooth function $f:X\rightarrow {\mathbb R}$ whose critical points (points $x \in X$ where the differential of $X$ vanishes) are 
non-degenerate (the matrix of second partial derivatives is non-singular), and 
lie on distinct level sets.  

If $f$ has $m$ critical points $x_1, \ldots, x_m$ ordered such that $f(x_1) < \ldots < f(x_m)$, a {\em slicing} of $f$ is an increasing sequence $a_0, \ldots, a_m$ so that $a_0 < f(x_1) < a_1 < \ldots < a_{m-1} < f(x_m) < a_m$. 

\begin{defn} \label{defn-topoequiv}
Let $f$ and $g$ be excellent Morse functions on $X$ each with $m$ critical points, and let $a_0, \ldots, a_m$ and $b_0, \ldots, b_m$ be slicings of $f$ and $g$ respectively. 
Say that $f$ and $g$ are {\em topologically equivalent} if for each $i \in \{1,\ldots, m\}$ there is an orientation preserving diffeomorphism between the sublevel sets $\{x \in X:f(x) \leq a_i\}$ and $\{x \in X:g(x) \leq b_i\}$. 
\end{defn}

On the sphere $S^2$ an excellent Morse function has $2n+2$ critical points for some $n \geq 0$, of which exactly $n$ are saddle points (with the rest being either local minima or local maxima). Motivated by a question of Arnold \cite{Arnold}, in \cite{Nic06} Nicoleascu considered the question of the number of topological equivalence classes of excellent Morse functions on the $2$-sphere $S^2$ with $n$ saddle points. Let $T_n^2$ denote this value. He obtained the lower bound 
\begin{equation} \label{Mn-lb}
T_n^2 \geq (2n-1)!!=(2/e)^{n+o(n)} n^n
\end{equation}
so that $\liminf_{n\rightarrow \infty} \frac{\log T_n^2}{n\log n} \geq 1$.
He later speculated \cite{N-MO} that the lower bound is essentially correct, in that
\begin{equation} \label{eq-Nic-spec}
\log T_n^2 \sim n\log n
\end{equation}
as $n \rightarrow \infty$. The main purpose of this note is to validate this speculation in a strong way.
\begin{thm} \label{thm-Mn-ub}
We have $T_n^2 \leq (4/e)^{n+o(n)}n^n$
and so $\log T_n^2 \sim n\log n$ as $n \rightarrow \infty$.
\end{thm}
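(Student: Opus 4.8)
The plan is to realize a game that first returns to the empty table after $2n+2$ steps as a walk $C_0 = \emptyset, C_1, \dots, C_{2n+2} = \emptyset$ in the graph on table-states, where two states are joined by an edge exactly when one legal move turns one into the other; since forgetting the first-return requirement only increases the count, it suffices to bound the number of closed walks of length $2n+2$ based at $\emptyset$. I would record a state by the number $p$ of empty plates together with the partition $\lambda$ listing the olive-counts of the non-empty plates, and carry along three bookkeeping quantities: the potential $\phi = p + q + |\lambda|$ (with $q$ the number of parts of $\lambda$), the olive count $s = |\lambda|$, and the vector $(n_1,\dots,n_5)$ recording how often each of the five move types is used. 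A direct check shows that $\phi$ increases by one on moves of types $1$ and $2$ and decreases by one on moves of types $3$, $4$, $5$, so $(\phi_i)_i$ is a Dyck path of semilength $n+1$ and every intermediate state has $p,q,s\le n+1$. The same accounting gives the balance identities $n_2=n_3$ and $n_1=n_4+n_5$, together with the observation that each plate that ever appears is created by its own ``put down an empty plate'' move, so $p+q\le n_1$, and in particular $q\le n_1$, throughout the game.

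Next I would bound, for each step, the number of distinct ways the prescribed move can be performed. A ``put down/remove an empty plate'' move is unique; an ``add/remove an olive'' move has at most $d+1$ realizations, where $d=d(\lambda)$ is the number of distinct parts of $\lambda$; and a ``combine'' move has at most $\binom{d+1}{2}$ realizations. The elementary inequality $\binom{d+1}{2}\le|\lambda|$ (a partition of $N$ has only $O(\sqrt N)$ distinct parts), together with $d\le q\le n_1$ and $q\le n+1-s-p$, gives two-sided control of every factor. Writing the number of walks as $\sum_{\text{move-type patterns}}\ \prod_i(\text{number of realizations at step }i)$, and using that a pattern already fixes the $\pm1$/flat profile of $s$, I would estimate the inner product: with $m:=n_2=n_3$, with $n_5\le n+1-m$, and with $d$ capped by $\min(q,\sqrt{2s})$ where $q\le n_1=n+1-m$, the product of realizations over a pattern is controlled by $\bigl(\prod_{\text{olive steps}}s_i\bigr)^{1/2}\prod_{\text{combine steps}}s_i$ up to a single-exponential factor. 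Bounding $\prod s_i$ over the up/down steps of a Dyck path of semilength $m$ by its triangular maximum $(m!)^2/m$, maximizing the resulting explicit expression in $m$, and folding in the count of the move-type patterns themselves, one obtains — after Stirling's formula — the bound $(4/e)^{n+o(n)}n^n$; with \eqref{Mn-lb} this gives $\log T_n^2\sim n\log n$.

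The step I expect to be the real obstacle is forcing the constant to be exactly $4/e$ rather than merely some $C^{n+o(n)}n^n$. The crude estimate (number of move-type patterns)$\,\times\,$(maximum product of per-step realizations) loses an uncontrolled single-exponential factor, since the patterns are themselves exponentially numerous and a near-optimal product is attained by many of them; one must instead estimate the weighted pattern-sum directly — in effect a weighted count of Dyck paths with leading behaviour $4^{n}$ — or else encode a game by the Dyck path of $\phi$ together with a word of bounded per-symbol entropy recording the detailed choices, and show that this word ranges over a set of size $n^{n}e^{-n+o(n)}$. One must also be careful to use $d\le\min(q,\sqrt{2s})$ and not merely $d\le\sqrt{2s}$: the latter is far too weak exactly when $s$ is large, for then $q$, hence $d$, is pinned down by $p+q\le n_1$ and $p+q+s\le n+1$. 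A convenient check on the bookkeeping is the asymptotic $(4/e)^{n}n^{n}\sim\tfrac{1}{\sqrt 2}\,\frac{(2n)!}{n!}=\tfrac{1}{\sqrt 2}\,2^{n}(2n-1)!!$, which also exhibits the bound as a factor $2^{n+o(n)}$ above Nicolaescu's lower bound.
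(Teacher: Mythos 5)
Your framework --- encode the game as a walk on partitions, follow a Dyck-path statistic, and bound the number of realizations of each move by the number of distinct parts, which is $O(\sqrt{s})$ --- is the same one the paper uses, and it does yield a bound of the form $C^n n^n$ (hence $\log T_n^2 \sim n\log n$). But, as you yourself flag, the argument as written does not reach the constant $4/e$, and the two ingredients you would need are exactly the ones missing. First, the paper does not bound (number of patterns) $\times$ (maximum product of per-step realizations); it evaluates the weighted sum over \emph{olive} Dyck paths exactly, via the classical identity that $\sum_{\rm OD}\prod_{\text{up-steps}}(h+1)=(2m-1)!!$ over Dyck paths of semilength $m$ (cited from Callan). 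Pairing each $O^+$ step with an $O^-$ step at the same height turns the per-step factors $w(t)\bigl(w(t+1)-1\bigr)\le 2(t+1)$ into $2^m\prod(h+1)$, and summing gives $2^m(2m-1)!!=(4/e)^{m+o(m)}m^m$; your max-times-count route gives only $C_m\cdot 2^m m!=(8/e)^{m+o(m)}m^m$. You correctly diagnose this obstacle but supply neither the identity nor a substitute for it.

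Second, the unified potential $\phi=p+q+|\lambda|$ forces you to treat all move-type patterns at once, and when the plate moves form a constant fraction of $n$ the pattern count alone exceeds $4^{n}$ while each combine move costs a full factor of order $n$ (your $\binom{d+1}{2}\le|\lambda|$), so the weighted pattern-sum is not obviously $(4/e)^{n+o(n)}n^n$. The paper avoids this by splitting on whether $p^-_s+p^-_c\ge 16n/\log n$. In the large regime it invokes the inequality $p^-_c\le v^+_f$ --- every plate that is later combined must at some point have received an olive while empty, and a first-olive-add has a unique realization, as does a simple plate-removal --- to save a factor $n^{(p^-_s+p^-_c)/2}$ and push the count below the lower bound $(2/e)^n n^n$. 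Your balance identities ($n_2=n_3$, $n_1=n_4+n_5$, $p+q\le n_1$) bound the state, not the move counts, and do not include this inequality. In the small regime the number of skeletons extending a given olive Dyck path is $2^{o(n)}$ and the plate moves contribute only $n^{p}=n^{O(n/\log n)}$, which is absorbed. Without both pieces the proof is not complete at the claimed constant.
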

In the rest of the introduction we give some more background on the problem and describe Nicolaescu's game of plates and olives, which turns the problem into a purely combinatorial one. The proof of Theorem \ref{thm-Mn-ub} appears in Section \ref{sec-proof}, and we conclude with some questions in Section \ref{sec-questions}.

\subsection{Background on the geometrical equivalence problem}

Motivated by Hilbert's 16th problem calling for a study of the topology of real algebraic varieties, Arnold \cite{Arnold} raised the broad question of the possible structures of excellent Morse functions on various manifolds, and in particular on $S^n$, and the specific enumerative question of how the number of possible structures grows as a function of the number of critical points.

A notion of equivalence is needed to make precise sense of the enumerative question. Arnold used the following, which has subsequently obtained the name {\em geometrical equivalence}: two excellent Morse functions $f$ and $g$ on $X$ are {\em geometrically equivalent} if there are
orientation-preserving diffeomorphisms $r: X \rightarrow X$ and $\ell:
{\mathbb R} \rightarrow {\mathbb R}$ such that $g = \ell \circ f \circ r^{-1}$. 

If $f$ is an excellent Morse function on the circle $S^1$ then it must have a positive even number, say $2n+2$, of critical points. If these points are at $x_1, \ldots, x_{2n+2}$, read counter-clockwise around the circle starting from the global minimum $x_1$, then it must be 
that $x_i$ is a local minimum whenever $i$ is odd and a local maximum whenever $i$ is even. Associate with $f$ a permutation $\sigma_f$ of $\{1,\ldots,2n+2\}$ via the rule that when written in one-line notation the $i$th entry of $\sigma_f$ is the position of $f(x_i)$ when the $f(x_j)$'s are listed from smallest to largest. This permutation has a zig-zag property: entries in even positions are greater than their immediate neighbors, while entries in odd positions are less than their immediate neighbors. 

It is evident that two excellent Morse functions on $S^1$ are geometrically equivalent if they induce the same permutation, and that for every zig-zag permutation $\sigma$ of $\{1,\ldots,2n+2\}$ there is an excellent Morse function $f$ on $S^1$ with $\sigma_f=\sigma$. So the number $G_n^1$ of geometrical equivalence classes of excellent Morse functions on $S^1$ with $2n+2$ critical points is the same as the number of zig-zag permutations of $\{1,\ldots,2n+2\}$. This combinatorial sequence is well-known, see e.g. \cite[A000182]{Sloane}, and is closely related to the tangent function. Specifically, the Taylor series of $\tan x$ is $\sum_{n \geq 0} G_n^1 \frac{x^{2n+1}}{(2n+1)!}$. It follows that $\log G_n^1 \sim 2n \log n$ as $n \rightarrow \infty$. The sequence $(G_n^1)_{n \geq 0}$ begins $(1, 2, 16, 272, 7\!~936,\ldots)$.

On $S^2$, again an excellent Morse function has $2n+2$ critical points for some $n \geq 0$, of which exactly $n$ are saddle points. Evidently there is only one equivalence class with no saddle points (a representative example is the latitude function), and there are two classes with one saddle point (four critical points). A representative example of one of these is the height function of a landscape that has a global minimum elevation at the south pole ($D$), steadily rising elevation to around the north pole, and around the north pole has a two-peaked mountain (peaks $A$ and $B$) with a saddle point ($C$) between the peaks. See the picture on the left in Figure \ref{mountains-and-volcanos}. To get a representative example of the other class, replace the two-peaked mountain at the north pole with a volcano whose crater is bowl-shaped (lowest point $B$) and whose rim, viewed as a height function on the circle, has one local maximum ($A$) and one local minimum ($C$); the local maximum on the rim is the global maximum on $S^2$ and the local minimum on the rim is a saddle point on $S^2$. See the picture on the right in Figure \ref{mountains-and-volcanos}. Figure \ref{mountains-and-volcanos} is reproduced from \cite[Figures 1 and 2]{Arnold}, where these two landscapes are referred to as the Elbrus mountain and the Vesuvius volcano.    

\begin{figure}[ht!]
\begin{center}
\includegraphics[scale=.7]{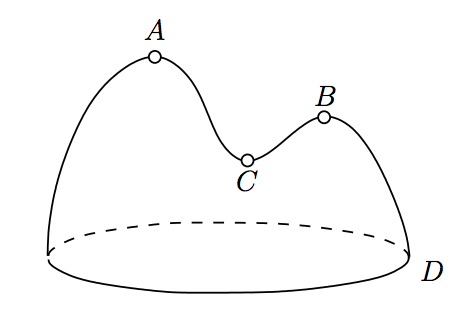}~~~\includegraphics[scale=.7]{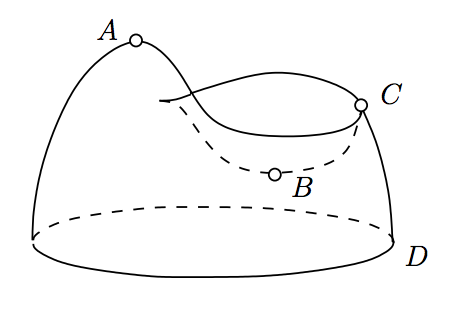}
\caption{The Elbrus mountain and the Vesuvius volcano.} \label{mountains-and-volcanos}
\end{center} 
\end{figure}

Arnold \cite{Arnold} calculated that the sequence $(G_n^2)_{n \geq 0}$ of geometrical equivalence classes of excellent Morse functions on $S^2$ with $n$ saddle points begins $(1, 2, 19, 428, 17\!~746, \ldots)$ \cite[A120420]{Sloane}, and speculated that the sequence grows like $n^{2n}$. In \cite{Nicolaescu, Nic06} Nicolaescu verified this speculation. He found recurrence relations for $G_n^2$, and also found that the generating function $\sum_{n \geq 0} G_n^2 \frac{x^{2n+1}}{(2n+1)!}$ can be expressed as the inverse of a function defined in terms of an (explicit) elliptic integral (note the similarity to the case of the circle, where we have $\arctan x = \int_0^x dt/(1+t^2))$. By analyzing this integral he concluded that $\log G_n^2 \sim 2n \log n$ as $n \rightarrow \infty$.

\subsection{Background on the topological equivalence problem}

The notion of topological equivalence (see Definition \ref{defn-topoequiv}) was introduced by Nicolaescu in \cite{Nicolaescu} (where he also introduced a notion of homological equivalence, that we do not consider here).
For a Morse function $f$ on $S^1$ with $2n+2$ critical points and a slicing $a_0, \ldots, a_{2n+2}$, the sublevel set $\{x\leq a_0\}$ is empty and $\{x \leq a_{2n+2}\}$ is all of $S^1$. For $i \in \{1,\ldots, 2n+1\}$ the set $\{x\leq a_i\}$ is a (non-empty) union of intervals, with $\{x\leq a_1\}$ and $\{x\leq a_{2n+1}\}$ both being a single interval, and for $i \in \{1,2n\}$ the number of intervals in $\{x\leq a_{i+1}\}$ is either one less than or one greater than the number in $\{x\leq a_i\}$. It follows that there is a one-to-one correspondence between equivalence classes and {\em proper Dyck paths} of semilength $n+1$ --- paths from $(0,0)$ to $(2n+2,0)$ taking steps $(1,1)$ and $(1,-1)$ that do not touch the $x$-axis except at $(0,0)$ and $(2n+2,0)$. The number of such paths is the $n$th Catalan number $C_n=\binom{2n}{n}/(n+1)$. The sequence of Catalan numbers begins $(1, 1, 2, 5, 14, \ldots)$ \cite[A000108]{Sloane} and its growth rate is given by $\log C_n \sim n\log 4$ as $n \rightarrow \infty$.         

On $S^2$, Nicolaescu obtained the lower bound (\ref{Mn-lb}) on the number $T_n^2$ of topological equivalence classes of excellent Morse functions with $n$ saddle points, and conjectured, via the asymptotic identity (\ref{eq-Nic-spec}), that there are many fewer topological than geometrical equivalence classes on $S^2$. Theorem \ref{thm-Mn-ub} validates this conjecture.

\subsection{The game of plates and olives}

Our proof of Theorem \ref{thm-Mn-ub} passes through the game of plates and olives, a purely combinatorial approach to the question of topological equivalence of Morse functions on $S^2$ described in \cite{Nic06}.

If $a \in {\mathbb R}$ is such that there is no critical point $x \in X$ with $f(x)=a$, then in \cite{Nicolaescu} it is shown that the sublevel set $\{f\leq a\}$ is either the whole of $S^2$ or is topologically equivalent to a finite set of disjoint disks (for convenience we refer to these as {\em outer} disks), with some of the outer disks perhaps being modified by the removal of a finite set of disjoint disks from the interior ({\em inner} disks). As the value $a$ crosses the critical point $x_1$, the sublevel set changes from being empty to being a single disk. As it crosses the critical point $x_{2n+2}$ (the final critical point) it changes from being a disk to being the whole of $S^2$. Crossing each other critical point, the topology of the sublevel set changes in one of the following ways. Either
\begin{itemize}
\item a new outer disk appears, or
\item an new inner disk appears in an outer disk, or
\item an inner disk disappears from an outer disk, or
\item an outer disk with no inner disks disappears, or
\item two outer disks, each with some inner disks, merge (and the number of inner disks in the resulting outer disk is the sum of the numbers of inner disks in the two outer disks that merged).
\end{itemize}
(See \cite[Section 3]{Nicolaescu} for a detailed discussion).
The evolution of the topology of the sublevel sets, which fully determines the topological equivalence class of $f$, can thus be encoded by the game of {\em plates and olives}, which proceeds as follows. There are inexhaustible supplies of indistinguishable olives and indistinguishable plates. The game begins with an empty table (corresponding to the empty sublevel set, before the first critical value is reached). At each step, one of the following happens. Either 
\begin{itemize}
\item an empty plate is put down (a {\em plate-add} or $P^+$ move), or 
\item an olive is put down on an existing plate (an {\em olive-add} or $O^+$ move), or
\item an olive is removed, necessarily from an existing plate (an {\em olive-remove} or $O^-$ move), or
\item an empty plate is removed (a {\em simple plate-remove} or $P^-_s$ move), or 
\item the olives on two plates which both have olives on them are combined on one of the two plates (it doesn't matter which since plates are indistinguishable) with the other (now empty) plate removed (a {\em complex plate-remove} or $P^-_c$ move).
\end{itemize}
These steps correspond directly to the changes in the topology of the sublevel sets of $f$ as the critical values are crossed.
The game ends the first time a step results in a return to an empty table (this corresponds to the final transition of the sublevel sets, from disk to $S^2$). Necessarily a game begins with a $P^+$ move and ends with a $P^-_s$ move. If there are a total of $n$ add moves, not counting the initial $P^+$ move,  then we refer to $n$ as the {\em length} of the game. Note that this necessitates that there are also exactly $n$ remove moves (not counting the final $P^-_s$ move). We denote by $M_n$ the number of games of length $n$. 

As discussed earlier, Nicolaescu \cite[Section 7]{Nicolaescu} exhibits a bijective correspondence between topological equivalence classes of excellent Morse function on the $2$-sphere $S^2$ with $n$ saddle points, and games of plates and olives of length $n$, so that $T_n^2=M_n$ and we can prove Theorem \ref{thm-Mn-ub} by showing $M_n \leq (4/e)^{n+o(n)}$.

Evidently $M_0 = 1$ and $M_1=2$. One of the two games of length $1$ --- the one corresponding to the Vesuvius volcano --- consists of two $P^+$ moves followed by two $P^-_s$ moves, and the other --- corresponding to the Elbrus mountain --- consists of a $P^+$ move followed by an $O^+$ followed by an $O^-$ followed by a $P^-_s$. Weingartner \cite{Weingartner} has calculated $M_n$ for $n \leq 18$; the sequence $(M_n)_{n \geq 0}$ begins $(1, 2, 10, 76, 772, \ldots)$ \cite[A295929]{Sloane}.


The game of plates and olives can also be viewed as a walk on the set of integer partitions. A configuration in the game can be encoded as a vector $\langle a_1, \ldots, a_k\rangle$ with $a_1 \geq a_2 \ldots \geq a_k > 0$, where an entry $a_j$ in the vector corresponds to a plate with $a_j-1$ olives. Such a vector is referred to as a {\em partition} of the integer $a_1+\ldots+a_k$. Legitimate moves in the game can easily be encoded as directed edges in a graph whose vertex set consists of all partitions of all positive integers, together with the empty partition $\emptyset$ (the unique partition of $0$).
Evidently $M_n$ equals the number of directed walks in this graph from $\emptyset$ to itself of length $2n+2$ that do not involve $\emptyset$ except at the beginning and end. (The values of $M_n$ for $n \leq 18$ have been calculated by considering powers of the adjacency matrix of relevant subgraphs of this graph.)   

We take the opportunity here to correct a slight error in \cite{Nicolaescu}, where the lower bound (\ref{Mn-lb}) is presented as $M_n \geq (2n+1)!!$. If we ignore edges in the directed partition graph that correspond to $P^-_c$ moves, then the resulting graph is un-directed (in the sense that whenever there is an edge from $a$ to $b$ there is a corresponding edge from $b$ to $a$), and is in fact the Hasse diagram of the very well studied Young's lattice. It is known (see for example \cite{Stanley}) that there are exactly $(2n+1)!!$ walks of length $2n +2$ in Young's lattice that start and end at $\emptyset$, and it was this observation that led to the lower bound $M_n \geq (2n+1)!!$ in \cite{Nicolaescu}. This count includes walks that make interim returns to $\emptyset$, however, and these do not correspond to legitimate games of plates and olives (or Morse functions). The bound is easily salvaged, though: there are $(2n-1)!!$ walks of length $2n$ in Young's lattice that start and end at $\emptyset$, and we can correspond to each one a unique walk of length $2n+2$ that starts and ends at $\emptyset$ and that does not otherwise involve $\emptyset$. We do this by adding the step from $\emptyset$ to $\langle 1 \rangle$ at the beginning of the walk, adding the step from $\langle 1 \rangle$ to $\emptyset$ at the end, and extending the partition at all remaining steps by including one new $1$. This yields the bound (\ref{Mn-lb}). This issue of interim returns to $\emptyset$ also accounts for the discrepancy between the values we quote for $M_2$, $M_3$ and $M_4$ ($10$, $76$ and $772$) and those quoted in \cite{Nicolaescu} ($15$, $107$ and $981$).   

\section{Proof of Theorem \ref{thm-Mn-ub}} \label{sec-proof}

It will be convenient to refine the notion of an olive-add move:
\begin{itemize}
\item when an olive is put down on an empty plate we refer to this as a {\em first olive-add} or $O^+_f$ move, and 
\item when an olive is put down on a plate that already has olives on it we refer to this as a {\em later olive-add} or $O^+_l$ move.
\end{itemize}
Let ${\mathcal M}_n$ be the set of all games of length $n$. For $g \in {\mathcal M}_n$ the {\em skeleton} $S(g)$ of the game is the sequence of length $2n+2$ whose $i$th term is $P^+$, $O^+_f$, $O^+_l$, $P^-_s$, $P^-_c$, $O^-$ according to whether at the $i$th step the move made is plate-add, first olive-add, later olive-add, simple plate-remove, complex plate-remove, or olive-remove. Note that the skeleton begins with $P^+$ and ends with $P^-_s$. 

From $S(g)$ we can read off a number of other parameters that it will be useful to associate with $g$: 
\begin{itemize}
\item $v^+_f(g)$, the number of $O^+_f$ moves in the game $g$, (we use $v$ for oli{\em v}e to avoid confusion with Landau's $o$-notation)
\item $v^+_l(g)$, the number of $O^+_l$ moves in $g$,
\item $v(g):=v^+_f(g)+v^+_l(g)$, which equals both the number of olive-add (first or later) and the number of olive-remove moves,
\item $p^-_s(g)$, the number of $P^-_s$ moves, not counting the $(2n+2)$nd (the final) move,
\item $p^-_c(g)$, the number of $P^-_c$ moves, and
\item $p(g):=p^-_s(g)+p^-_c(g)$, which equals both the number of plate-remove (simple or complex) and the number of plate-add moves, not counting the first and the $(2n+2)$nd.
\end{itemize}
We clearly have the relation $v(g)+p(g)=n$. We also have the following relation, which will be crucial later:
\begin{obsv} \label{obsv-crucial}
$p^-_c(g) \leq v^+_f(g)$. 
\end{obsv}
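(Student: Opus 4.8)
The plan is to exhibit an integer-valued potential on configurations that changes in a controlled way under each move, namely the number of \emph{non-empty plates} (plates bearing at least one olive) currently on the table. Call this quantity $q$. Since the game begins and ends with an empty table, $q = 0$ at the start and $q = 0$ at the end, so the total of all increases to $q$ over the course of the game equals the total of all decreases.

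Next I would run through the six move types in the skeleton and record the effect of each on $q$. A $P^+$ move adds an empty plate, so $q$ is unchanged; a $P^-_s$ move removes an empty plate, so again $q$ is unchanged. An $O^+_l$ move adds an olive to an already non-empty plate, leaving $q$ fixed. An $O^+_f$ move turns an empty plate into a non-empty one, so it raises $q$ by exactly $1$ — and these are the \emph{only} moves that increase $q$, always by exactly one. An $O^-$ move removes an olive from some plate $p$: if $p$ held exactly one olive it becomes empty and $q$ drops by $1$, otherwise $p$ is still non-empty and $q$ is unchanged; so an $O^-$ move lowers $q$ by either $0$ or $1$. Finally a $P^-_c$ move combines the olives of two non-empty plates onto one of them (which remains non-empty, indeed now has at least two olives) and removes the other (now empty) plate, so $q$ drops by exactly $1$.

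Putting this together: the total increase in $q$ over the game is exactly $v^+_f(g)$, while the total decrease is $p^-_c(g)$ plus the number of those $O^-$ moves that empty a plate. Equating the two totals gives
\[
v^+_f(g) \;=\; p^-_c(g) + \#\{\text{plate-emptying } O^- \text{ moves}\} \;\geq\; p^-_c(g),
\]
which is the desired inequality.

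I do not anticipate a genuine obstacle here; the one point requiring care is the case analysis for $P^-_c$ and $O^-$ — specifically confirming that a complex plate-remove decreases $q$ by exactly $1$ (two non-empty plates are replaced by one non-empty plate) rather than by more, and that no move can change $q$ by more than one in either direction. These facts follow directly from the descriptions of the moves given in the introduction, so the argument is essentially just the bookkeeping above.
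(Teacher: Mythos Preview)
Your argument is correct and is essentially a formalized version of the paper's one-sentence proof: the paper simply observes that each plate removed in a $P^-_c$ move must previously have received a first olive, which is exactly the statement that non-empty plates are created only by $O^+_f$ moves and are destroyed (in particular) by $P^-_c$ moves. Your potential-function bookkeeping with $q$ makes this balance explicit and sidesteps any worry about tracking indistinguishable plates, but the underlying idea is the same.
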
 
Indeed, each plate removed in a complex plate-remove move must at least once have had an olive placed on it while it was empty.

Our intuition is that there should not be too many games that involve many $P^-_s$ moves --- once we have decided to make a $P^-_s$ move, there is a unique choice for the specific move, so $P^-_s$ moves are not ``costly''. Similarly there should not be too many games that involve many $O^+_f$ moves, and hence (by Observation \ref{obsv-crucial}) not too many that involve many $P^-_c$ moves. After verifying this intuition we are able to focus attention on games in which the number of plates involved is relatively small; we will put an upper bound on the number of such games by translating the problem to a classical one of enumerating weighted Dyck paths.

We will initially put an upper bound on $M_n$ by bounding, for each possible skeleton $S$, the number of games that can have that skeleton. The key observation is the following. Suppose that at some moment there are exactly $t$ olives on the table. Encode the state of the game by a vector $(a_0, a_1, a_2, \ldots)$ where $a_i$ is the number of plates that have exactly $i$ olives on them. Let $w(t)$ be the largest integer $t'$ such that $t'(t'-1)/2 \leq t$. Then we claim that
\begin{equation} \label{eq-w}
|\{i: a_i \neq 0\}| \leq w(t).
\end{equation}
Indeed, if $|\{i: a_i \neq 0\}| = \ell$ then the number of olives on the table has to be at least
$$
0+1+\ldots + (\ell-1) = \frac{\ell(\ell-1)}{2}
$$ 
(to obtain $|\{i: a_i \neq 0\}| = \ell$ with as few olives as possible one needs one plate with $i$ olives, for each $i \in \{0, \ldots, \ell-1\}$). If $\ell > w(t)$ then by the definition of $w(t)$ this requires more than $t$ olives.

Using the indistinguishability of plates and of olives, (\ref{eq-w}) allows us to put an upper bound on the number of possible moves of various types that can happen at any particular moment, as a function of $t$ and $w(t)$. 
\begin{claim} \label{claim-precise}
If at some moment there are exactly $t$ olives on the table, then there are
\begin{itemize}
\item at most $1$ $O^+_f$, $P^+$ and $P^-_s$ moves that can be made,
\item at most $w(t)$ $O^+_l$ moves,
\item at most $w(t)-1$ $O^-$ moves, and
\item at most $w^2(t)$ $P^-_c$ moves.
\end{itemize}    
\end{claim}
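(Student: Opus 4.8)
To prove Claim \ref{claim-precise}, the plan is to treat each of the six move types in turn, counting in each case the number of \emph{distinct} moves available --- two moves being identified precisely when they lead to the same configuration, since plates and olives are indistinguishable --- as a function of the state vector $(a_0, a_1, a_2, \ldots)$, and then using (\ref{eq-w}) to pass from a bound in terms of the number of nonzero entries of this vector to a bound in terms of $w(t)$.

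First I would dispose of the three ``rigid'' moves. A $P^+$ move always does the same thing (a fresh plate with no olives appears), so exactly one such move is available in any state. An $O^+_f$ move is available only when some plate is empty, i.e. $a_0 \geq 1$, and when it is available its effect is forced, so at most one such move can be made; the identical remark applies to a $P^-_s$ move. This gives the first bullet.

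Next come the moves acting on plates that already carry olives. An $O^+_l$ move is determined by the choice of the current olive-count $i \geq 1$ of the plate to be augmented, so the number available is $|\{i \geq 1 : a_i \neq 0\}| \leq |\{i : a_i \neq 0\}| \leq w(t)$ by (\ref{eq-w}), which is the second bullet. For an $O^-$ move one likewise chooses a positive $i$ with $a_i \neq 0$, and here I would squeeze out the extra $-1$ directly: if there are $m$ such values then they are $m$ distinct positive integers, hence their sum, and a fortiori the total number $t$ of olives, is at least $1 + 2 + \cdots + m = m(m+1)/2 = (m+1)((m+1)-1)/2$; by the definition of $w$ this forces $m + 1 \leq w(t)$, i.e. $m \leq w(t) - 1$, which is the third bullet. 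Finally, a $P^-_c$ move is specified (up to indistinguishability) by the unordered pair of positive olive-counts of the two plates being merged, so crudely the number available is at most the number of ordered pairs drawn from $\{i \geq 1: a_i \neq 0\}$, namely $|\{i \geq 1 : a_i \neq 0\}|^2 \leq w(t)^2 = w^2(t)$, again by (\ref{eq-w}); this is the last bullet.

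I do not anticipate any genuine difficulty; the only place that needs a second's thought is the $O^-$ estimate, where applying (\ref{eq-w}) naively would only yield $w(t)$. The point is that an $O^-$ move cannot act on an empty plate, so the relevant quantity is the number of \emph{positive} part-sizes present, and realizing $m$ of these costs $m(m+1)/2$ olives rather than the $m(m-1)/2$ that $m$ arbitrary part-sizes (with $0$ allowed) would cost.
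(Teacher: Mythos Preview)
Your proof is correct and follows essentially the same approach as the paper: handle each move type separately, reduce the count to the number of relevant nonzero entries of the state vector, and invoke (\ref{eq-w}). The one genuine difference is in the $O^-$ case: the paper argues by cases according to whether an empty plate is present (if so, the index $0$ uses up one of the at most $w(t)$ nonzero entries; if not, one could add an empty plate without changing $t$ and reach a contradiction), whereas you observe directly that $m$ distinct \emph{positive} part-sizes force $t \geq 1+2+\cdots+m = (m+1)m/2$, so $m+1 \leq w(t)$ by the definition of $w$. Your version is a bit more streamlined and avoids the case split, but both are equally valid.
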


\begin{proof}
The first two points are straightforward. For the third point above, a bound of $w(t)$ is clear, but we can refine the argument slightly and replace $w(t)$ with $w(t)-1$. Indeed, if there are exactly $t$ olives on the table and at least one empty plate, then there are at most $w(t)-1$ $O^-$ moves that can be made (since we cannot remove an olive from an empty plate). On the other hand if there are no empty plates, 
then again there can be at most $w(t)-1$ possible $O^-$ moves (if we could make $w(t)$ $O^-$ moves, then with the addition of an empty plate we would have a configuration with $t$ olives and  $w(t)+1$ possible $O^-$ moves, a contradiction).

For the fourth point note that in this case we have to choose two plates with some olives on them.  
\end{proof}
The difference between $w(t)$ and $w(t)-1$ in the third point is asymptotically inconsequential but will be computationally helpful later.  In the fourth point, since the order of the plates does not matter we could replace $w^2(t)$ with $w^2(t)/2$, but this gains us nothing substantial.

We have $w(t) \sim \sqrt{2t}$ as $t \rightarrow \infty$ and so $w(t) \leq \sqrt{3t}$ for all sufficiently large $t$. Since for any game in ${\mathcal M}_n$ there can be at most $n$ olives on the table at any given moment, we have the following crude estimates as a corollary of Claim \ref{claim-precise}:
\begin{cor} \label{cor-crude}
For all sufficiently large $n$, at any moment
\begin{itemize}
\item at most $\sqrt{3n}$  $O^+_l$ or $O^-$ moves can be made,
\item at most $1$ $P^+$, $P^-_s$ or $O^+_f$ moves, and
\item at most $3n$ $P^-_c$ moves.
\end{itemize}    
\end{cor}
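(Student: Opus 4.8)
The plan is to deduce Corollary \ref{cor-crude} directly from Claim \ref{claim-precise}, by replacing each olive-count-dependent bound $w(t)$ or $w^2(t)$ with its value at the worst case $t=n$, and then invoking the asymptotic estimate $w(t) \leq \sqrt{3t}$ (valid for all sufficiently large $t$) recorded immediately before the corollary. The whole argument is a short packaging of facts already in hand; the only subtlety is making the pointwise-in-$t$ bounds of the claim uniform over an entire game.

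First I would record two elementary facts. The first is that $w$ is non-decreasing: since $w(t)$ is the largest integer $t'$ with $t'(t'-1)/2 \leq t$, enlarging $t$ can only enlarge (or preserve) the set of admissible $t'$, so $w(s) \leq w(t)$ whenever $s \leq t$. The second is that for any game $g \in {\mathcal M}_n$, the number of olives on the table at any moment is at most $n$. This follows because the total number of olive-add moves is $v(g)=v^+_f(g)+v^+_l(g)$, and the relation $v(g)+p(g)=n$ forces $v(g) \leq n$; since every olive present must have been placed by some olive-add move, at most $n$ olives can ever be simultaneously on the table.

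Next I would chain these with Claim \ref{claim-precise}. Fix any moment during a game in ${\mathcal M}_n$ and let $t$ be the number of olives then present, so $t \leq n$. By the claim, the number of available $O^+_l$ moves is at most $w(t)$, and by monotonicity $w(t) \leq w(n)$; likewise the number of available $O^-$ moves is at most $w(t)-1 \leq w(n)$, and the number of available $P^-_c$ moves is at most $w^2(t) \leq w^2(n)$. The bounds of $1$ on the $P^+$, $P^-_s$ and $O^+_f$ moves transfer verbatim from the first bullet of the claim, since they carry no dependence on $t$.

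Finally I would insert the asymptotic estimate. Because $w(t) \sim \sqrt{2t}$, there is an $n_0$ with $w(n) \leq \sqrt{3n}$ (equivalently $w^2(n) \leq 3n$) for all $n \geq n_0$. Combining this with the previous paragraph gives, for $n \geq n_0$ and at every moment, at most $\sqrt{3n}$ available $O^+_l$ or $O^-$ moves and at most $3n$ available $P^-_c$ moves, which are exactly the asserted bounds. I do not expect any genuine obstacle here; the one point meriting a word of care is the passage from the estimate $w(t) \leq \sqrt{3t}$, which holds only for large $t$, to a bound uniform over all moments of a game (where $t$ may be small). This is precisely what the monotonicity of $w$ together with the uniform cap $t \leq n$ provides, since it lets us replace the varying $t$ by the single large value $n$ before applying the asymptotic estimate.
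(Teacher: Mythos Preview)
Your proposal is correct and follows the paper's own approach: bound the olive count at any moment by $n$, pass from the $t$-dependent bounds of Claim~\ref{claim-precise} to bounds in terms of $w(n)$, and then invoke $w(n)\le\sqrt{3n}$ for large $n$. You are, if anything, more explicit than the paper in isolating the monotonicity of $w$ to justify replacing $w(t)$ by $w(n)$ when $t$ may be small, but this is a minor expository refinement rather than a different argument.
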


Using Corollary \ref{cor-crude} we can quickly establish an upper bound of the form $M_n \leq C^n n^n$ (so that indeed $\log M_n \sim n\log n$, as speculated by Nicolaescu \cite{N-MO}).

\begin{claim} \label{claim-crude}
$M_n \leq 108^n n^n$.
\end{claim}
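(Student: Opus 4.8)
The plan is a two-level count: first bound the number of possible skeletons, then, for each fixed skeleton, bound the number of games that realise it. A skeleton is a word of length $2n+2$ over the six-letter alphabet $\{P^+, O^+_f, O^+_l, P^-_s, P^-_c, O^-\}$ whose first letter is $P^+$ and whose last letter is $P^-_s$, so there are at most $6^{2n}$ of them. (Most of these words are not skeletons of actual games, but for an upper bound this is harmless.)

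Fix a skeleton $S$; I would count the games $g$ with $S(g)=S$ by building $g$ one step at a time. At the $i$th step the skeleton prescribes the \emph{type} of move, and what remains is to specify the actual move --- on which plate an olive is placed or removed, or which pair of occupied plates is merged (there is no freedom in adding or removing an empty plate, since plates are indistinguishable). By Corollary~\ref{cor-crude}, for all sufficiently large $n$ there is at most one such choice at each $P^+$, $P^-_s$, or $O^+_f$ step, at most $\sqrt{3n}$ choices at each $O^+_l$ or $O^-$ step, and at most $3n$ choices at each $P^-_c$ step; crucially these bounds do not depend on the current configuration, only on the fact that at most $n$ olives are ever on the table. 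Multiplying the per-step bounds over the $2n+2$ steps, and noting that the number of $O^+_l$, $O^-$, and $P^-_c$ positions in $S$ is $v^+_l(g)$, $v(g)$, and $p^-_c(g)$ respectively (with all other positions contributing a factor $1$), the number of games with skeleton $S$ is at most
$$
(\sqrt{3n})^{\,v^+_l(g)}\,(\sqrt{3n})^{\,v(g)}\,(3n)^{\,p^-_c(g)}.
$$

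To finish I would feed in the elementary relations $v^+_l(g)\le v(g)$ (since $v=v^+_f+v^+_l$), $p^-_c(g)\le p(g)$ (since $p=p^-_s+p^-_c$), and $v(g)+p(g)=n$, which collapse the displayed bound to $(\sqrt{3n})^{2v(g)}(3n)^{p(g)}=(3n)^{v(g)+p(g)}=(3n)^n$. Hence every skeleton is realised by at most $(3n)^n$ games, and $M_n\le 6^{2n}(3n)^n=108^n n^n$ for all sufficiently large $n$; the finitely many remaining values are covered directly by the known data for $M_n$. The only real subtlety, and what I would be most careful about, is the bookkeeping: a naive bound of $3n$ choices at each of the $2n$ non-trivial steps would give a useless $n^{2n}$, and it is exactly the facts that $P^+$, $P^-_s$, and $O^+_f$ steps are free and that $v^+_l\le v$, combined with the identity $v+p=n$, that pull the exponent of $n$ back down to $n$. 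The rest of the section will push this same idea further --- exploiting the cheap move types more aggressively and recasting the residual count as one of weighted Dyck paths --- to replace the constant $108$ by (essentially) $4/e$.
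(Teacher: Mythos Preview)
Your proof is correct and follows essentially the same approach as the paper's: bound the number of skeletons by $6^{2n}=36^n$, then use the per-step bounds from Corollary~\ref{cor-crude} together with $v+p=n$ to cap each skeleton at $(3n)^n$ games. The only cosmetic difference is that you initially keep $O^+_f$ and $P^-_s$ separate (with their cost-$1$ bound) and then relax via $v^+_l\le v$ and $p^-_c\le p$, whereas the paper lumps $O^+_f$ with $O^+_l$ and $P^-_s$ with $P^-_c$ from the outset; the resulting arithmetic is identical.
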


\begin{proof} 
The skeleton of a game always starts with a $P^+$ and ends with a $P^-_s$. Each of the remaining $2n$ steps can be any one of $P^+$, $O^+_f$, $O^+_l$ $P^-_s$, $P^-_c$ or $O^-$, so there are at most $36^n$ skeletons. 
If a skeleton has $v$ steps that are $O^+$ (either $O^+_f$ or $O^+_l$) then it also has $v$ steps that are $O^-$, and there at most $\big(\sqrt{3n}\big)^{2v}=3^v n^v$ choices for the particular moves that are made at those steps. Of remaining $2n-2v+2$ steps, the first and last are forced, as are the $n-v$ $P^+$ steps, while on the $n-v$ remaining $P^-_s$ or $P^-_c$ steps there are at most $(3n)^{n-v}=3^{n-v}n^{n-v}$ choices for the particular moves that are made. This means that each possible skeleton can be the skeleton of at most $3^n n^n$ games, and so $M_n \leq 108^n n^n$.
\end{proof}

To reduce the constant $C$ from  $108$ to $4/e$, we have to work harder. We begin by counting the games in which $p^-_s+p^-_c \geq 16n/\log n$. The goal is to show that there are relatively few games with this property, by obtaining an upper bound of the form $C^n n^n$ for the number of such games, with $C < 2/e$ (so that this upper bound is smaller than the lower bound on $M_n$ from (\ref{Mn-lb})). We will argue that we can reduce the bound of Claim \ref{claim-crude} by $(3n)^{p_s^-}$ by using that there is only one option for a $P_s^-$ move, rather than at most $3n$, and we can reduce the bound also by $(\sqrt{3n})^{v_f^+} \geq (\sqrt{3n})^{p_c^-}$ by using that there is only one option for an $O_f^+$ move, rather than at most $\sqrt{3n}$. If $p^-_s+p^-_c$ is suitably large, this reduces the count to below $(2/e)^n n^n$.

There are at most $(n+1)^2$ choices for the pair $(p^-_s,p^-_c)$ in the regime $p^-_s+p^-_c \geq 16n/\log n$, and for each such pair there are again at most $36^n$ skeletons. For a given skeleton
\begin{itemize}
\item there is $1$ way to specify the move made at each of the $p^-_s+p^-_c+1$ steps labelled $P^+$ (the $+1$ here is included for the first step of the game, a $P^+$ move),
\item among the $v:=n-(p^-_s+p^-_c)$ steps labelled $O^+$ (either $O^+_f$ or $O^+_l$), at least $p^-_c$ of them must be $O^+_f$ (by Observation \ref{obsv-crucial}). There is $1$ way to specify the move made at each of these at least $p^-_c$ steps, and at most  $\sqrt{3n}$ ways to specify the move made at each of the remaining at most $v-p^-_c$ steps,
\item there are at most  $\sqrt{3n}$ ways to specify the move made at each of the $v$ steps labelled $O^-$,
\item there are at most  $3n$ ways to specify the move made at each of the $p^-_c$ steps labelled $P^-_c$, and 
\item there is $1$ way to specify the move made at each of the $p^-_s+1$ steps labelled $P^-_s$ (the $+1$ here is included for the last step of the game, a $P^-_s$ move).
\end{itemize} 
Putting this together we find that each possible skeleton can be the skeleton of at most
\begin{eqnarray*}
(3n)^{\frac{v-p^-_c}{2} + \frac{v}{2} + p^-_c} & = & (3n)^{n-p^-_s - \frac{p^-_c}{2}}  \\
& \leq & 3^n n^{n-\left(\frac{p^-_s+p^-_c}{2}\right)} \\
& \leq & \frac{3^n n^n}{n^\frac{8n}{\log n}} \\
& = & \left(\frac{3}{2^8}\right)^n n^n
\end{eqnarray*}
games, and so the number of games in this regime ($p^-_s+p^-_c \geq 16n/\log n$) is at most
\begin{equation} \label{inq-many_plates}
(n+1)^2\left(\frac{108}{2^8}\right)^n n^n \leq \left(\frac{2}{e}-c\right)^{n+o(n)} n^n 
\end{equation}
where $c>0$ is a constant.

We now turn to the regime $p^-_s+p^-_c \leq 16n/\log n$. Here we will not distinguish between $P^-_s$ and $P^-_c$ moves, so we refer to both as $P^-$ moves; nor will distinguish between $O^+_f$ and $O^+_l$ moves, so we refer to both simply as $O^+$ moves. This convention carries through to the notion of the skeleton: we will now only use symbols $O^+$, $O^-$, $P^+$ and $P^-$ in the skeleton, and we will work with the parameters $p(g)$ and $v(g)$ rather than the more refined parameters $p^-_s$, $p^-_c$, $v^+_f$ and $v^+_l$.

We count games slightly differently in this regime. To each game $g$ we associate a Dyck path of semilength $v(g)$ (a path from $(0,0)$ to $(2v(g),0)$ taking steps $(1,1)$ and $(1,-1)$ that stays on or above the $x$-axis). The association is made by taking a $(1,1)$ step each time an $O^+$ move is made and a $(1,-1)$ step each time an $O^-$ move is made. We refer to this as the {\em underlying olive Dyck path} ${\rm OD}(g)$ of the game.

We begin by fixing a $p \leq 16n/\log n$ (there are at most $16n/\log n + 1=2^{o(n)}$ options for $p$). Next we select a Dyck path ${\rm OD}$ of semilength $v:=n-p$ to be the underlying olive Dyck path of the game. There are
$$
\binom{2n}{2v}2^{2p} = \binom{2n}{2p} 4^p = 2^{o(n)}  
$$  
ways to extend this underlying olive Dyck path to a skeleton. The $\binom{2n}{2v}$ accounts for locating where in the $2n$ internal steps of the skeleton (those that are not the first or the last) the $2v$ steps labelled $O^+$ and $O^-$ are located, and the $2^{2p}$ accounts for deciding which among the remaining $2p$ internal steps are labelled $P^+$ and which $P^-$. In the second equality, we use the symmetry of binomial coefficients. In the asymptotic estimate we use the basic bound $\binom{n}{k}\leq (n/k)^k$ along with $p\leq 16n/\log n$.

To count at most how many games can have a particular skeleton we now use the more precise Claim \ref{claim-precise} rather than the cruder Corollary \ref{cor-crude}. There is $1$ way to specify which $P^+$ move happens at each step labelled $P^+$ in the skeleton, and at most $3n$ ways to specify which $P^-$ move happens at each step labelled $P^-$, so there are a total of at most $(3n)^p = 2^{o(n)} n^p$ ways to specify which $P^+$, $P^-$ moves happen in the game.    

We now turn to specifying the $O^+$ and $O^-$ moves. At a step labelled $O^+$ that is at height $t$ in ${\rm OD}$ (height measured by the $y$ co-ordinate of the lower end of the step in the path; so the height is equal to the number of olives that are on the table at that moment), there are at most $w(t)$ ways to specify which $O^+$ move happens. At a step labelled $O^-$ that is at height $t$ in ${\rm OD}$, there are  $t+1$ olives on the table, and so there are at most $w(t+1)-1$ ways to specify which $O^-$ move happens. 

It is straightforward to pair up steps labelled $O^+$ and $O^-$ in ${\rm OD}$, with paired steps having the same height. On such a pair at height $t$, the number of ways of specifying which moves happen is at most 
$$
w(t)(w(t+1)-1) \leq w(t+1)(w(t+1)-1) \leq 2(t+1)
$$ 
(the first inequality is by monotonicity of $w$ and the second follows from the definition of $w$). Denoting the height of a step $s$ by $h(s)$ it follows that there are at most
$$
\prod_{\mbox{$s$ an $O^+$ step of ${\rm OD}$}} 2(h(s)+1) \leq 2^n  \prod_{\mbox{$s$ an $O^+$ step of ${\rm OD}$}} (h(s)+1)
$$   
ways to specify which $O^+$, $O^-$ moves happen in the game, and so the number of games (for each $p$) is at most
\begin{equation} \label{eq-Dyck-sum}
2^{n+o(n)} n^p \sum \left\{ \prod_{\mbox{$s$ an $O^+$ step of ${\rm OD}$}} (h(s)+1):\begin{array}{c}
\mbox{${\rm OD}$ a Dyck path}\\
\mbox{of semilength $n-p$}
\end{array}
\right\}.
\end{equation}
It is well-known (see for example \cite[Section 2.6]{Callan}) that the sum in (\ref{eq-Dyck-sum}) evaluates exactly to $(2(n-p)-1)!!$, so that the number of games $g$ with $p(g)=p \leq 16n/\log n$ is at most
$$
2^{n+o(n)} n^p  \left(\frac{2}{e}\right)^{n-p+o(n-p)} (n-p)^{n-p} = \left(\frac{4}{e}\right)^{n+o(n)} n^n.
$$
Given that there at most $2^{o(n)}$ choices for $p$ and combining with (\ref{inq-many_plates}) this completes the proof of Theorem \ref{thm-Mn-ub}.

\section{Questions} \label{sec-questions}

Together (\ref{Mn-lb}) and Theorem \ref{thm-Mn-ub} show that $(1/n)\sqrt[n]{M_n}=\Theta(1)$.
\begin{question} \label{quest-lim}
Does $\lim_{n \rightarrow \infty} (1/n)\sqrt[n]{M_n}$ exists, and if so where in the interval $[2/e,4/e]$ does it lie?
\end{question}
Both the upper and lower bounds seem to leave room for improvement --- the lower bound because it ignores $P^-_c$ moves and the upper bound because it uses a worst-case bound $\sqrt{2m}$ on the number of distinct parts in a partition of the integer $m$, whereas (as shown by Wilf \cite{Wilf}) the typical partition has only $(1+o(1))\sqrt{6m}/\pi$ distinct parts. A calculation of $M_n$ for $n \leq 18$ \cite{Weingartner} shows that $(1/n)\sqrt[n]{M_n}$ is decreasing in this range with $(1/18)\sqrt[18]{M_{18}} \approx 1.09206 < 4/e$, providing further evidence that the upper bound is not tight, as well as providing evidence that the limit in Question \ref{quest-lim} does exist. 

The study of geometrical and topological equivalence classes of Morse functions on $S^1$ and $S^2$ leads naturally to three well-studied combinatorial structures --- zig-zag permutations, Dyck paths and Young's lattice --- and it would be of interest to see what structures emerge when the same problems are tackled on $S^n$ for $n \geq 3$, or on other manifolds, such as the torus.


\begin{thebibliography}{99}

\bibitem{Arnold}
V. Arnold, Smooth functions statistics, {\em Funct. Anal. Other Math.} {\bf 1} (2006), 111--118.

\bibitem{Callan}
D. Callan, A combinatorial survey of identities for the double factorial, arXiv:0906.1317.

\bibitem{Nicolaescu}
L. Nicolaescu, Counting Morse functions on the 2-sphere, {\em Compositio Math.} {\bf 144} (2008), 1081--1106.

\bibitem{Nic06} L. Nicolaescu, Morse functions statistics, {\em Funct. Anal. Other Math.} {\bf 1} (2006), 85--91.

\bibitem{N-MO}
L. Nicolaescu, A game of plates and olives, mathoverflow question 263966, March 7 2017.

\bibitem{Sloane}
Online encyclopedia of integer sequences, \url{https://oeis.org}.

\bibitem{Stanley}
R. Stanley, Differential posets, {\em J. Amer. Math. Soc.} {\bf  1} (1988), 919--–961.

\bibitem{Weingartner}
K. Weingartner, personal communication.

\bibitem{Wilf}
H. Wilf, Three problems in combinatorial asymptotics, {\em J. Combin. Th. A} {\bf 35} (1983), 199--207.

\end{thebibliography}
\end{document}